\newtheorem{proposition}{Proposition}[section]
\tikzstyle{bigblock} = [draw, fill=blue!20, rectangle, 
\tikzstyle{medblock} = [draw, fill=blue!20, rectangle, 
\tikzstyle{mux} = [draw, fill=black!20, rectangle, 
\tikzstyle{smallblock} = [draw, fill=blue!20, rectangle, 
\tikzstyle{data_block} = [draw, fill=green!20, rectangle, 
\tikzstyle{ops_block} = [draw, fill=blue!20, rectangle, 
\tikzstyle{est_block} = [draw, fill=red!20, rectangle, 
\tikzstyle{sum} = [draw, fill=blue!20, circle, node distance=1cm,minimum height=0.5cm]
\tikzstyle{signal} = [coordinate]
\tikzstyle{pinstyle} = [pin edge={to-,thin,black}]
\tikzstyle{block} = [draw, fill=blue!20, rectangle, 
\tikzstyle{blockS} = [draw, fill=blue!20, rectangle, 
\tikzstyle{input} = [coordinate]
\tikzstyle{output} = [coordinate]
\newcommand{\bc}{\begin{center}}
\newcommand{\ec}{\end{center}}
\newcommand{\benum}{\begin{enumerate}}
\newcommand{\eenum}{\end{enumerate}}
\newcommand{\nn}{\nonumber}
\newcommand{\matl}{\left[ \begin{array}}
\newcommand{\matr}{\end{array} \right]}
\newcommand{\matls}{\left[ \begin{smallmatrix}}
\newcommand{\matrs}{\end{smallmatrix} \right]}
\newcommand{\isdef}{\stackrel{\triangle}{=}}
\newcommand{\rmD}{{\rm D}}
\newcommand{\rmI}{{\rm I}}
\newcommand{\rmN}{{\rm N}}
\newcommand{\rmP}{{\rm P}}
\newcommand{\rmT}{{\rm T}}
\newcommand{\rmf}{{\rm f}}
\newcommand{\BBR}{{\mathbb R}}
\newcommand{\shiftq}{{\textbf{\textrm{q}}}}
\renewcommand{\matl}{\begin{bmatrix}}
\renewcommand{\matr}{\end{bmatrix} }
\title{\LARGE An In-situ Solid Fuel Ramjet Thrust Monitoring and Regulation Framework Using Neural Networks and Adaptive Control}
\author{\large Ryan DeBoskey, Parham Oveissi, Venkat Narayanaswamy, and Ankit Goel% <-this % stops a space
\thanks{Ryan DeBoskey and Venkat Narayanaswamy are with the Department of Mechanical Engineering, North Carolina State University, Raleigh, NC 27695 {\tt\small rddebosk@ncsu.edu, vnaraya3@ncsu.edu }}
\thanks{Parham Oveissi and Ankit Goel are with the Department of Mechanical Engineering, University of Maryland, Baltimore County,1000 Hilltop Circle, Baltimore, MD 21250. {\tt\small parhamo1@umbc.edu, ankgoel@umbc.edu }}
}
\begin{document}

\maketitle

\begin{abstract}

Controlling the complex combustion dynamics within solid fuel ramjets (SFRJs) remains a critical challenge limiting deployment at scale.  
This paper proposes the use of a neural network model to process in-situ measurements for monitoring and regulating SFRJ thrust with a learning-based adaptive controller.
% to monitor and control thrust in-situ.  
A neural network is trained to estimate thrust from synthetic data generated by a feed-forward quasi-one-dimensional SFRJ model with variable inlet control.
An online learning controller based on retrospective cost optimization is integrated with the quasi-one-dimensional SFRJ model to regulate the thrust.  
Sensitivity studies are conducted on both the neural network and adaptive controller to identify optimal hyperparameters.  
%Introducing noise into the neural network training set significantly enhances the controller's robustness against real-world sensor noise.
Numerical simulation results indicate that the combined neural network and learning control framework can effectively regulate the thrust produced by the SFRJ model using limited in-situ data.  

\end{abstract}
% \listoftodos
\section{INTRODUCTION}
\label{sec:introduction}

Solid fuel ramjet (SFRJ) platforms promise greater propulsive performance and capability over traditional rocket engines by ingesting freestream air for combustion with high-density solid fuels and without any complex turbomachinery \cite{Schulte1986}.  
Combustion involves complex interaction between turbulent flame-dynamics, solid fuel pyrolysis, and complex hydrocarbon chemistry.  A review of the progress and challenges in understanding the SFRJ combustion phenomena is compiled in several review articles \cite{KRISHNAN1998219,Gany_2009,Veraar_2022}.  Due to the multi-physics nature of the dynamics, precise control over the combustor equivalence ratio and other performance metrics is extremely challenging limiting deployment at scale.

For SFRJ platforms, direct command over system performance requires modifying the inflow conditions to the combustor subsystem.  The majority of studies have considered the use of bypass air channels to modulate system thrust \cite{natan1993experimental,pelosi2003bypass}. Experimentally, bypass ratio control has shown increased combustor efficiency \cite{evans2023performance}, but requires the addition of bypass piping which increases the complexity of the SFRJ geometry whilst decreasing the theoretical capacity for solid fuel loading. 
Recent work has investigated the potential of using variable inlet design as an alternative means for regulating SFRJ performance \cite{deboskey_morphing}.  
Variable inlet control presents potential advantages by minimizing the actuator hardware footprint and providing a novel mechanism to regulate air ingested for combustion and, thus, thrust.
However, due to the complex dynamic relationship between the inlet geometry and thrust generated, which is sensitive to geometric variations of the SFRJ, as well as variations in flow conditions and dependence on flight conditions, designing a control system to regulate the SFRJ thrust remains a challenging problem. 
The focus of this work is thus the investigation of an adaptive control technique to regulate the thrust generated by an SFRJ.

Previous controllers designed for thrust modulation of SFRJ engines have primarily assumed the generated thrust is available as an input to the control system \cite{goel2015retrospective,goel2018retrospective, oveissi2023learning,oveissi2024,oveissi2025adaptive}.
However, in-flight measurement of generated thrust is extremely challenging, particularly for supersonic and hypersonic vehicle platforms \cite{conners1998full}.  Many previous works have been conducted to characterize SFRJ performance, resulting in the development of empirical relations relating inflow conditions to system performance \cite{Schulte1986,netzer1991burning,veraar2018sustained}.  
Although useful in elucidating principal determinants, continued improvement in the understanding of the native complex multiphysics process, through experiments and numerical simulations \cite{deboskey_facility}, is still necessary to develop highly accurate phenomenological models.  
Alternatively, thrust models for the SFRJ system can be constructed using data-driven and machine learning approaches. An inherent advantage of machine learning is that a thrust model can be developed from any available input set. This enables the creation of a model based entirely on in-situ measurements obtained from a real-world supersonic platform.

This work utilizes artificial neural networks (ANNs) to predict the thrust of a model SFRJ platform using in situ measurements.  Previous studies have utilized neural networks to model and improve the performance of SFRJ and related propulsive platforms in a variety of ways.  Thrust prediction in hybrid-rocket \cite{zavoli_nn} and solid-rocket \cite{ZHANG2024114051} engines has been successfully demonstrated using neural networks trained on both synthetic and experimental data, respectively.  Many studies have considered neural networks in surrogate-based analysis and optimization \cite{QUEIPO20051}, where neural networks are used to model and improve the design of propulsion systems \cite{SHYY200159,Saidyanathan_nn_optimization}.  Analogously, neural networks have also been used to determine model sensitivities for solid fuel combustion to optimize experimental design \cite{BOJKO2025113829}.  Fundamentally, ANNs are known to be universal function approximators \cite{zou2009overview}, making them well-suited to quickly and accurately estimate thrust from limited measurements as considered in this work.  

%Further, recent work with neural network models suggests that injection of random noise into the training data set can improve the robustness of models \cite{sietsma1991creating,fawzi2016robustness,he2019parametric}, which suggests that, in the presence of real-world sensor noise, the ANN model can still effectively predict the generated thrust of the SFRJ model.

% \cite{durali_2005} - velocity regulation using neural networks and adaptive sliding control?

This work investigates the application of an online, learning-based control design technique called retrospective cost adaptive control (RCAC) \cite{rahman2017retrospective} to regulate the thrust generated by an SFRJ model with variable-geometry inlet as the input and ANN-predicted thrust with in situ measurements. 
RCAC has been recently demonstrated as a viable technique to synthesize an adaptive control system to regulate the thrust and prevent inlet unstart in a liquid-fuel scramjet engine \cite{goel2015retrospective,goel2018retrospective,goel2019output} and to regulate thrust in an SFRJ \cite{oveissi2023learning,oveissi2024,oveissi2025adaptive}.
However, in \cite{oveissi2023learning,oveissi2024,oveissi2025adaptive}, the SFRJ input was assumed to be the heat flux and the measurement was assumed to be direct thrust measurements. 
In contrast, this work considers a realistic input, that is, a variable-geometry inlet and in situ measurements to compute thrust. 

The paper is organized as follows. 
Section \ref{sec:numerical_model} describes the numerical SFRJ model and the ANN used to predict SFRJ thrust, 
Section \ref{sec:control} describes the data-driven, learning-based control system to regulate the thrust, 
Section \ref{sec:results} describes several numerical examples demonstrating the application of the proposed approach to regulate the SFRJ thrust, and 
Section \ref{sec:conclusion} concludes the paper. 

\section{Numerical Methodology}
\label{sec:numerical_model}
This study considers steady cruise operation of a model 140 mm external diameter SFRJ at a flight Mach number of 3.25 and altitude of 30 km.
A constant freestream temperature and pressure are assumed based on the 1976 US Standard Atmosphere model \cite{1976atmosphere}.  Table \ref{cruise_condition} provides details of the selected cruise condition. 

\begin{table}[htbp]
\centering
\caption{Overview of selected cruise condition}\label{cruise_condition}
\begin{tabular}{ c  c  c  c }
\hline \hline
$M$ & $H$ [km] & $P_{t0}$ [Pa] & $T_{t0}$  [K]\\
\hline
3.25 & 30 & 63677 & 748.6  \\
\hline
\end{tabular}
\end{table}

\subsection{SFRJ Model}
\label{sec:SFRJmodel}

This work considers a static quasi-one-dimensional feed-forward SFRJ with variable cowl control, modified based on the model developed by DeBoskey et al. \cite{deboskey_morphing}.  Figure \ref{IntegratedSystemDiagram} shows a schematic of a SFRJ platform with thermodynamic stations labeled.  Freestream air, station ``0'', is ingested and compressed through the inlet and isolator system, station ``1'', and enters the combustor entrance, station ``2''.  The solid fuel is embedded within the combustion chamber in station ``3''; the port radius $r_3$ increases as the fuel is consumed during combustion.  Additional mixing and combustion occurs at the combustor aft-end, station ``4'', before expansion and exhaust through the nozzle, station ``e''. 

\begin{figure}[htbp]
\centering
    \includegraphics[width=1\columnwidth]{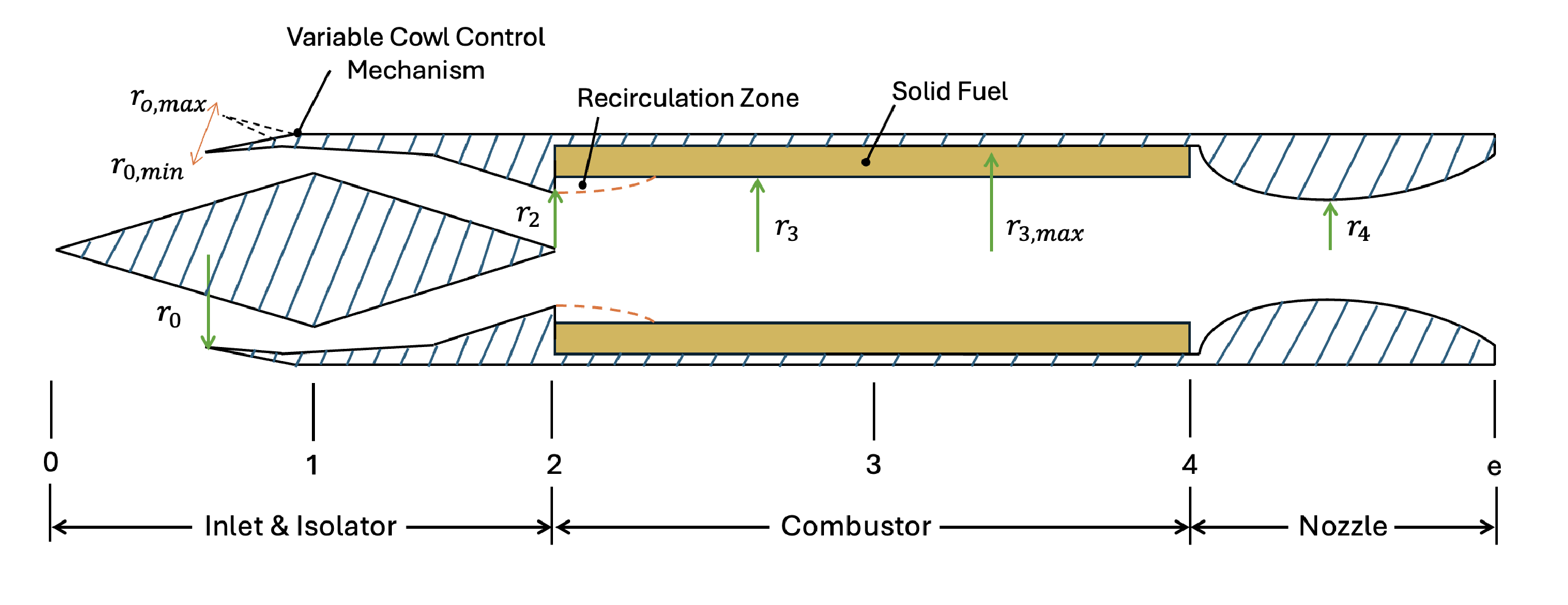}
    %\vspace{-2em}
    \caption{Schematic of model SFRJ projectile with thermodynamic stations labeled.  System performance is controlled through the translation of a variable cowl mechanism.}
    %\vspace{-1em}
    \label{IntegratedSystemDiagram}
\end{figure}

A generic axisymmetric nose-spike center body inlet with a variable geometry cowl is selected for the design. 
The nose-spike and cowl design loosely resembles the NASA 1507 inlet \cite{slater2024wind}. 
The variable geometry mechanism adjusts both the location of the cowl tip in the $x$-direction and the angle of the cowl.
This corresponds to a translation in the cowl tip along the conical shock angle of the center body at the cruise Mach condition. 
The translation is represented in terms of the cowl radius, $r_0$, which is equivalent to the freestream capture area radius.  
This approach is similar to other cowl-based variable geometry methods in which the cowl angle and cowl location are investigated as separate operations \cite{Teng.et.al-CowlLocation,dalle2011performance,Reardon.et.al-CompUnstart,Liu.et.al-Restart,RamprakashMuruganandam-ExpUnstart,TengJianHuacheng-VariableCowl}.  
In \cite{deboskey_morphing}, a series of parametric 2D axisymmetric Reynolds-Averaged Navier-Stokes (RANS) simulations were performed on a structured two-dimensional grid with an increasing freestream capture area.  Isolator exit mass flow rate, mass-averaged stagnation pressure, and mass-averaged Mach number were calculated and curve-fitted as a function of $r_0.$
This model is extended to work across a range of altitudes by normalizing the isolator exit quantities based on freestream conditions.

The isolator exit conditions are then fed forward to a quasi-one-dimensional combustor model.
The regression rate of the solid fuel, $\dot{r}$, is assumed to depend on the inlet mass flux, $G$, total temperature, $T_{t2}$, and combustor pressure, $P_4$, and is given by
\begin{equation}\label{regression}
    \dot{r} =  \alpha G_{a}^{a} P_4^{b} T_{t2}^{c} ,
\end{equation}
\noindent where $\alpha$, $a$, $b$, and $c$ are empirical curve fits from Vaught et al. \cite{vaught1992investigation}.  This work considers a hypothetical model hydroxyl-terminated polybutadiene (HTPB) polymer that is completely converted to the gaseous hydrocarbon, 1,3-butadiene ($C_4 H_6$).  The model assumes that the fuel grain only burns radially, with uniform regression along the axial length of the combustor.
To calculate the combustor aft-end mixing pressure for \eqref{regression}, a simple friction correlation is used to model internal friction, which implies that 
\begin{equation}
    \Delta P_{t,2-4} = \frac{f_D}{4} \frac{L_f}{d} \frac{\rho_3 u_3^2 }{2}  ,
\end{equation}
where $\Delta P_{t,2-4}$ is the pressure change between station ``2'' and station ``4'', $f_D$ is the Darcy friction factor, $L_f$ is the length of the fuel grain, $\rho$ is the port fluid density, and $u_3$ is the port velocity.

Due to the solid fuel regression, the solid fuel port radius, $r_3$, is constantly increasing during SFRJ operation and is updated based on \cite{Hadar1992,evans2023performance} as
\begin{equation}
    r_{3}(t_{i+1}) = r_{3}(t_{i}) - \dot{r}(t_i) (t_{i+1}-t_{i}).
\end{equation}
The simulations are time-integrated until the port radius reaches the maximum allowable radius, $r_{3,\rm max}$. Table \ref{nominalgeo} details the SFRJ geometry considered in this work. 
\begin{table}[htbp] 
 	\centering
 	\caption{Description of SFRJ geometry}
 	\begin{tabular}{ c c c c c}
 		\hline \hline
 		$r_{0}$ [mm] & $r_{2}$ [mm] & $r_3$ &  $r_{t}$ [mm] & $L_{f}$ [mm]\\
 		\hline
 		[47.88,59.28] & 46.7 & [59.2,68.6] & 50.4 & 500\\
            \hline
 	\end{tabular}
 	\label{nominalgeo}
\end{table}

At each time-step, the fuel mass flow rate of 1,3-butadiene, $\dot{m}_f$, is given by
\begin{equation}
    \dot{m}_f 
        % =
        %     2 \pi r_3 L_f \rho_f \dot{r}
        =
            A_f \rho_f \dot{r},
\end{equation}
where $A_f = 2 \pi r_3 L_f$ is the exposed surface area of the fuel grain and $\rho_f$ = 900 kg/m$^3$ is the density of HTPB.
Note that a global equivalence ratio, $\phi_G$ can be calculated by
\begin{equation}
    \phi_G = \frac{f}{f_{stoich}} = \frac{\dot{m}_f / \dot{m}_{air}}{f_{stoich}} ,
\end{equation}
where $f_{stoich}$ is the stoichiometric fuel-to-air mass flow ratio.  
The equilibrium flame temperature, $T_{4,\rm eq}$, and the ratio of specific heats, $\gamma_{4}$, at the aft mixing end are calculated using an equilibrium Gibbs solver, assuming constant enthalpy and pressure, in CANTERA \cite{cantera}.
A pressure-comprehensive skeletal kinetics mechanism for 1,3-butadiene combustion developed by Ciottoli et al. \cite{Ciottoli2017} is used to calculate equilibrium products.
Previous work has validated the skeletal mechanism against detailed 1,3-butadiene combustion mechanisms \cite{deboskey2025development,DEBOSKEY2025CNF}.  
The final static temperature at the aft-mixing end of the SFRJ combustor, $T_{4}$, is calculated, assuming a constant combustion efficiency of $\eta_c$ = 0.75, as
%Calculate the true static temperature at the aft-mixing end by multiplying equlibrium static temperature by combustion efficiency: (Can choose to use combustion efficiency model - coded up modified form from \cite{vaught1992investigation})
\begin{equation}
      T_{4} = \eta_c ( T_{4,\rm eq} - T_{2}) + T_{2}.
\end{equation}

%\subsubsection{System Performance}\label{nozzle}

Using isentropic expansion to ambient pressure, the aft-end combustor total pressure and temperature are used to determine a theoretical exhaust velocity, $u_{\rm e,th}$, which is computed as
\begin{align}
    u_{\rm e,th} 
        =
            \big[ 
                2\gamma_4 R_4 T_{t4}(1/(\gamma_4-1)) \cdot \nn \\
                (1 - (P_0 / P_{t4})^{(\gamma_4-1)/\gamma_4})
            \big]^{1/2},
\end{align}
\noindent where $R_4$ and $\gamma_4$ are the specific gas constant and ratio of specific heats, respectively, calculated using the equilibrium composition from station ``4'' in CANTERA.  
This work assumes an idealized nozzle that expands gas at the combustor exit to ambient pressure and operates with a nozzle efficiency, $\eta_n$ = 0.95, to calculate the actual exhaust velocity, $u_{e}$.  
Finally, the generated thrust, $T$, is given by
\begin{equation}
    T = \dot{m}_{air} (1 + f) u_e - \dot{m}_{air} u_0.
\end{equation}

\subsection{Thrust Estimation using Artificial Neural Networks}

%Others are inlet total temperature, pressure, strain rate gauges, other

%\cite{conners1998} - compare direct measurement of strain rate gauges to analytical model derived from altitude, Mach number, engine flow properties in supersonic environment.

%In-situ measurement of CO through TDLAS measurement in combustion \cite{Wang2000,WEBBER2000407}

%CO measurement well studied due to emissions and in atmosphere, review of other laser technqiues given \cite{zellweger2012evaluation}

% INSERT FIGURE OF CONTROL DIAGRAM w NN

This work considers a hypothetical supersonic platform with limited on-board sensing that measures altitude $H$, total combustor pressure $P_{t4}$, and the combustor exhaust composition of carbon monoxide $X_{CO}$. 
Carbon monoxide is chosen due to the extensive study of in-situ combustor measurement through tunable diode laser absorption spectroscopy \cite{wang2000situ,WEBBER2000407}.  
Additionally, feedback of the cowl radius, $r_0$, state is available for thrust estimation.

An ANN is trained to estimate the thrust from the limited on-board sensors.  
The neural network architecture is defined according to a sensitivity study of hyperparameters that evaluates the ANN's performance.
The final model architecture, schematically drawn in Figure \ref{NN_diagram}, is composed of 1 hidden layer composed of 20 neurons.
The input is the available on-board measurements and the output was the estimated thrust.

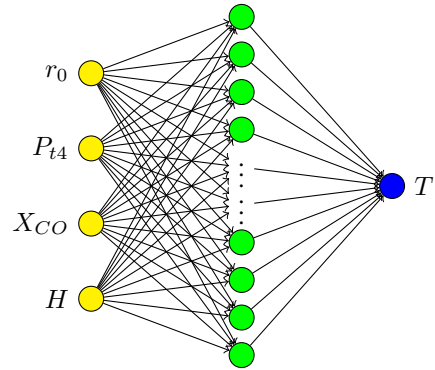
\begin{figure}[htbp]
    \centering
    \begin{tikzpicture}[scale=1, transform shape]
            
            % Input Layer
            \node[circle, draw, minimum size=0.3cm, fill=yellow] (I-1) at (0,1.5) {};
            \node[anchor=east] at (I-1.west) {$r_0$};
            
            \node[circle, draw, minimum size=0.3cm, fill=yellow] (I-2) at (0,0.5) {};
            \node[anchor=east] at (I-2.west) {$P_{t4}$};

            \node[circle, draw, minimum size=0.3cm, fill=yellow] (I-3) at (0,-0.5) {};
            \node[anchor=east] at (I-3.west) {$X_{CO}$};

            \node[circle, draw, minimum size=0.3cm, fill=yellow] (I-4) at (0,-1.5) {};
            \node[anchor=east] at (I-4.west) {$H$};
            
            % Hidden Layer
            \foreach \i in {1,...,4} {
                \node[circle, draw, minimum size=0.3cm, fill=green] (H-\i) at (2,2.75-\i*0.5) {};
            }

            \foreach \i in {5,6} {
                \node (H-\i) at (2,2.75-\i*0.5) {$\vdots$};
            }

            \foreach \i in {7,...,10} {
                \node[circle, draw, minimum size=0.3cm, fill=green] (H-\i) at (2,2.75-\i*0.5) {};
            }
            
            % Output Layer
            \node[circle, draw, minimum size=0.3cm, fill=blue] (O-1) at (4,0) {};
            \node[anchor=west] at (O-1.east) {$T$};

            % Connections from Input Layer to Hidden Layer
            \foreach \i in {1,...,4} {
                \foreach \j in {1,...,10} {
                    \draw[->] (I-\i) -- (H-\j);
                }
            }
            
            % Connections from Hidden Layer to Output Layer
            \foreach \i in {1,...,10} {
                \draw[->] (H-\i) -- (O-1);
            }
            
        \end{tikzpicture}
    \caption{Final artificial neural network architecture diagram (Case NNA)}
    \label{NN_diagram}
\end{figure}

The ANN model was trained using the open-source TensorFlow \cite{abadi2016tensorflow} library.  The ANN was trained across a range of altitude from 10 to 40 km and across the full range of possible $r_0$ and $r_3$ provided in Table \ref{nominalgeo}.  Synthetic thrust measurements were generated using the computational model described in the previous section at 50 discrete points for each independent variable, resulting in a total of 125,000 measurements.  In-situ measurement of combustor pressure $P_{t4}$ and composition of carbon monoxide $X_{CO}$ were collected for each simulation.  The data set was split into a training and testing set, with 80\% and 20\% of the data, respectively.  The ADAM method for stochastic optimization \cite{kingma2014adam} is utilized to minimize the mean-squared error, $MSE$, loss function
\begin{equation}
	MSE = \frac{1}{n}\sum_{i=1}^{n}(y_{i}-\hat{y}_i)^2,
\end{equation}
where ${y}_{i}$ and $\hat{{y}}_i$ are the  true values and the ANN predicted output values, respectively.
The sigmoid activation function, $\sigma(\zeta)$, is applied on all layers to return a normalized value of [0,1], represented mathematically as
\begin{equation}
	\sigma(\zeta) = \frac{1}{1 + e^{-\zeta}},
\end{equation}  
where $\zeta$ is a scalar value.  
\begin{table}
	\centering
	\caption{\label{sensitivity} Summary of ANN Hyperparameter Sensitivity Study}
	\begin{tabular}{ccccc}
     	\hline \hline
		Case & A & B & C & D \\ 
            \hline 
		Nodes & 5 & 10 & 20 & 40 \\ %\hline
		Activation Function & sigmoid & tanh & relu & leakyrelu \\ %\hline
            Batch Size & 25 & 50 & 100 & 200 \\ \hline
	\end{tabular}
\end{table}

A sensitivity study was performed to determine the training batch size, activation function for the hidden layers, and number of hidden layers to be used by the final model.  
Table \ref{sensitivity} summarizes the range of hyperparameters tested in this sensitivity study. 
Minimization of the mean-squared training and testing loss is used as the parameter to evaluate the ANN performance.   
Figure \ref{sensitivity_analysis} shows the resulting training loss $J_{\rm loss}$ for the ANN hyperparameter sensitivity analysis.  
The testing loss is not shown for brevity.  
The results show the most significant model improvement with increased hidden layer nodes and relative insensitivity to activation function or batch size.  
The final ANN model contains 20 hidden layer nodes, uses the sigmoid activation function, and trains on a batch size of 100. 
The ANN was trained for 100 epochs, which was found to be a good compromise between training and testing loss to ensure the model is not over- or under-fit.  
Training of the ANN model required approximately 2 minutes on a single Intel\textregistered$\;$ Core\texttrademark$\;$ i7-8550U CPU.

\begin{figure}[htbp]
    \centering
    \includegraphics[width=\columnwidth]{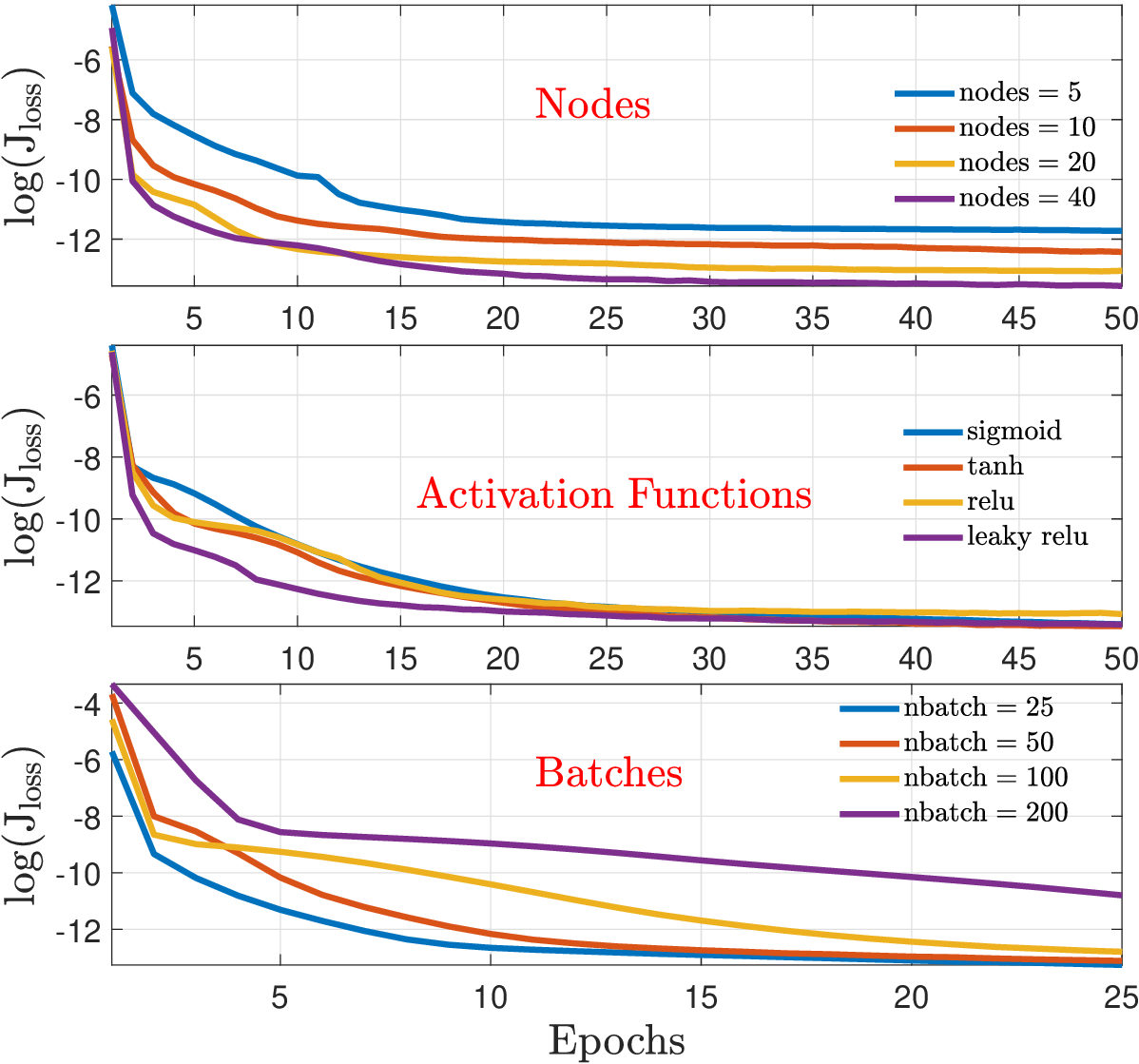}
    %\vspace{-1em}
    \caption{Resulting training loss of ANN hyperparameter sensitivity analysis.}
    %\vspace{-1em}
    \label{sensitivity_analysis}
\end{figure}

%\begin{table}
%	\centering
%	\caption{\label{optimum} Summary of Final Model Hyperparameters}
%	\begin{tabular}{cccc}
%        \hline\hline
%		Batch Size & Activation Function & Nodes & Epochs \\ \hline
%		100 & sigmoid & 20 & 100 \\  \hline
%	\end{tabular}
%\end{table}

\section{Adaptive Controller}
\label{sec:control}
This section describes the learning-based controller used to control the thrust generated by the SFRJ model considered in this work.
The adaptive control architecture to regulate the SFRJ thrust is shown in Figure \ref{fig:BSL}.
The input to the computational SFRJ model is the freestream capture radius, as shown in Figure \ref{IntegratedSystemDiagram}.
In this work, we first consider the ideal scenario where the output is the thrust generated by the SFRJ. 
However, in practice, the thrust generated by the SFRJ can not be directly measured. 
Therefore, we then use in-situ combustor measurements and a neural network trained with open-loop simulations to predict the thrust generated by the SFRJ. 
We consider an adaptive PID controller whose gains are optimized by the retrospective cost adaptive control (RCAC) algorithm.
The output of the controller is the throat radius.
The controller gains are recursively updated using only the controller output's past values and the thrust measurement's past values. 

\begin{figure}[htbp]
    \centering
    \resizebox{\columnwidth}{!}
    {
    \begin{tikzpicture}[auto, node distance=2cm,>=latex',text centered]
    
        \node at (-3,0) (reference) {};
        \node[sum, right = 3 em of reference] (sum) {};
        \draw[->] (sum.east) -- +(0.5,0) -- +(0.5,-1) -- +(1.25,-1)
                    -- +(3,1);
        \node [smallblock, fill=green!20, right = 3 em of sum] (Controller) 
        {$\begin{array}{c}{\rm Adaptive } \\ {\rm Controller}\end{array}$};

        % \node [smallblock, fill=green!20, right = 3 em of Controller] (Nonlinear) 
        % {$\begin{array}{c}{\rm Nonlinear} \\ {\rm Map}\end{array}$};

        % \node [smallblock, fill=green!20, right = 3 em of Nonlinear] (ZOH) {ZOH};

        \node [smallblock, right = 1.5 em of Controller] (Plant) 
        % {SFRJ};
        {$\begin{array}{c}{\rm Computational} \\ {\rm SFRJ \ Model} \\ {(\rm Section \ \ref{sec:SFRJmodel})}\end{array}$};

        \node [smallblock, fill=green!20, right = 3 em of Plant] (ANN) 
        {$\begin{array}{c}{\rm Trained} \\ {\rm ANN}\end{array}$};
        
        % \node [smallblock,  fill=green!20, right = 3 em of Plant] (A2D) {A2D};
        
        % , minimum height=3em, text width=1.6cm

        % \node at {(Plant)+(3,0)}  (output) {$y$};
        % \node[right = 5 em of Plant] (output) {$y$};
        
        \draw[->] (reference) node[xshift = 1.5em, yshift = 1.6em] {$\begin{array}{c}{\rm Commanded } \\ {\rm thrust}\end{array}$} -- node[xshift = 1.5em, yshift = -1.7em]{$-$} (sum);
        \draw[->] (sum) -- node[xshift = 0.25em, yshift = 0em]{$\begin{array}{c}{\rm Thrust } \\ {\rm error}\end{array}$}(Controller);
        \draw[->] (Controller)-- node[xshift = 0em, yshift = .2em]{$r_0$} (Plant);
        \draw[->] (Plant)-- node[xshift = 0em, yshift = .2em]{$\matl r_0 \\ P_{t4} \\ X_{CO} \\ H \matr$} (ANN);
        % \draw[->] (Plant)-- node[xshift = 0em, yshift = .2em]{$w_k$} (ANN);
        % \draw[->] (ZOH) -- node[xshift = 0em, yshift = .2em]{$\overline w$} (Plant);
        % \draw[->] (Plant)-- node[xshift = 0em, yshift = .2em]{$\overline{y}$} (A2D);
        \draw[->] (ANN.east) -- node[xshift = 1em, yshift = 0em]{$\begin{array}{c}{\rm Predicted } \\ {\rm thrust}\end{array}$} +(1,0) --  +(+1,-1.5)
                    -| (sum.south);
        % \draw[->] (Plant.north)+(0,1) node[xshift = 0.5em, yshift = -0.5em]{$d$} -- (Plant.north);

    \end{tikzpicture}
    }
    %\vspace{-2em}
    \caption{Control architecture to regulate the thrust generated by the SFRJ with in situ measurements. }
    \label{fig:BSL}
\end{figure}
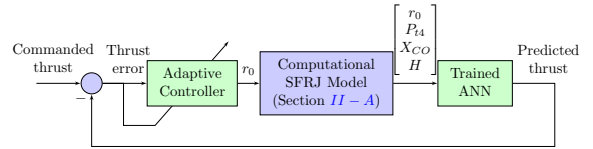

The adaptive PID controller is given by
\begin{align}
    u_k = K_{\rmP,k} z_k + K_{\rmI,k} \gamma_k + K_{\rmD,k} (z_k-z_{k-1}),
    \label{eq:PI_control}
\end{align}
where $z_k$ is the performance variable defined as the difference between the commanded thrust $r_k$ and the measured thrust output $y_k,$ that is, $z_k \isdef r_k - y_k,$
$\gamma_k$ is the integrated performance variable given by
\begin{align}
    \gamma_k
        \isdef 
            \sum_{i=0}^k z_i, 
\end{align}
and the scalars $K_{\rmP,k} $, $K_{\rmI,k}$ and $K_{\rmD,k}$ are the proportional, integral and derivative gains optimized by the RCAC algorithm at step $k. $
The integral signal $\gamma_k$ is computed recursively as
% \begin{align}
    $\gamma_{k+1} = \gamma_k + z_{k+1}.$
% \end{align}
% 
Note that the PID control law \eqref{eq:PI_control} can be written in the regressor form as
% \begin{align}
    $u_k = \Phi_k \theta_k, $
%     \label{eq:PI_control_ref}
% \end{align}
where 
\begin{align}
    \Phi_k 
        \isdef 
            \matl 
                z_k & \gamma_k & z_k-z_{k-1}
            \matr, 
    \quad
    \theta_k 
        \isdef 
            \matl 
                K_{\rmP,k} \\
                K_{\rmI,k} \\
                K_{\rmD,k}
            \matr,
\end{align}
% where the regressor matrix $\Phi_k$ contains the measured data and the controller gain vector $\theta_k$ is optimized by the RCAC algorithm described in \cite{oveissi2023learning,poudel2023learning}.
% % 
% % 
% % 
% The nonlinear map \eqref{eq:nonlinear_map} is chosen such that the magnitude of the adaptive control signal $u_k$ remains close to $\SO(1)$ to ensure the numerical reliability of the algorithm that solves the optimization problem in RCAC.
where the regressor matrix $\Phi_k$ contains the measured data and the controller gain vector $\theta_k$ is optimized by the RCAC algorithm. To determine the controller gains $\theta_k$, let $\theta \in \BBR^{l_\theta}$, and consider the \textit{retrospective performance variable} defined by
%%%\vspace{-5pt}
\begin{align}
    \hat{z}_{k}(\theta)
        \isdef
            z_k+ 
            G_\rmf(\shiftq) (\Phi_{k} \theta - u_{k}),
    \label{eq:zhat_def}
\end{align}
where 
% \begin{align}
    $G_\rmf(\shiftq) 
        \isdef
            \sum_{i=1}^{n_\rmf} \frac{N_i}{\shiftq^i}.$
% \end{align}
is a finite-impulse response filter. 
Note that $N_i \in \BBR^{l_z \times l_u}.$
% The sign of $\sigma$ is the sign of the leading numerator coefficient of the transfer function from $u_k$ to $z_k.$
%
Furthermore, define the \textit{retrospective cost function} $J_k \colon \BBR^{l_\theta} \to [0,\infty)$ by
%%%\vspace{-10pt}
\begin{align}
    J_k(\theta) 
        &\isdef
            \sum_{i=0}^k
                \hat{z}_{i}(\theta) ^\rmT 
                R_z
                \hat{z}_{i}(\theta)
                 +
                % (\Phi_k \theta)^\rmT
                % R_u
                % (\Phi_k \theta)
                % +
                % \neweqline
                (\theta-\theta_0)^\rmT 
                P_0^{-1}
                (\theta-\theta_0),
    \label{eq:RetCost_def}
\end{align}
where $R_z \in \BBR^{l_z \times l_z}$, $ R_u \in \BBR^{l_u \times l_u},$ and $P_0\in\BBR^{l_\theta\times l_\theta}$ are positive definite; and $\theta_0\in\BBR^{l_\theta}$ is the initial vector of controller gains.
%
% For all examples in this paper, we set $\theta_0 = 0$; however, $\theta_0$ can be initialized to nonzero gains in practice if desired.

\begin{proposition}
    Consider \eqref{eq:RetCost_def}, 
    where $\theta_0 \in \BBR^{l_\theta}$ and $P_0 \in \BBR^{l_\theta \times l_\theta}$ is positive definite. 
    For all $k\ge0$, denote the minimizer of $J_k$ given by \eqref{eq:RetCost_def} by
    \begin{align}
        \theta_{k+1}
            \isdef
                \underset{ \theta \in \BBR^n  }{\operatorname{argmin}} \
                J_k({\theta}).
        \label{eq:theta_minimizer_def}
    \end{align}
    Then, for all $k\ge0$, $\theta_{k+1}$ is given by 
    \begin{align}
        \theta_{k+1} 
            &=
                \theta_k  - 
                 P_{k+1}\Phi_{\rmf, k}^\rmT R_z
                \left( z_k + \Phi_{\rmf,k} \theta_k - u_{\rmf,k} \right)
                %  \nn  \\ &\quad \quad 
                 % -
                 % \neweqline
                 % P_{k+1}\Phi_{k}^\rmT
                 % R_u \Phi_{k} \theta_k 
                 , \label{eq:theta_update}
    \end{align}
    where 
    \begin{align}
        P_{k+1} 
            &=
                P_{k}
                -  
                P_k  
                \Phi_{\rmf,k} ^\rmT 
                \left( 
                    R_z ^{-1} +  
                    \Phi_{\rmf,k}
                    P_k
                    \Phi_{\rmf,k} ^\rmT 
                \right)^{-1}
                \Phi_{\rmf,k} P_k,
        \label{eq:P_update_noInverse}
    \end{align}
    and
    $\Phi_{\rmf,k}
            \isdef 
                G_\rmf(\shiftq) \Phi_{k} $ and $ 
        u_{\rmf,k}
            \isdef 
                G_\rmf(\shiftq) u_{k}.$
    % \begin{align}
    %     \Phi_{\rmf,k}
    %         \isdef 
    %             G_\rmf(\shiftq) \Phi_{k}, \quad 
    %     u_{\rmf,k}
    %         \isdef 
    %             G_\rmf(\shiftq) u_{k}. 
    %     % \overline \Phi_k
    %     %     \isdef 
    %     %     \matl{c}
    %     %         \Phi_{\rmf,k} \\
    %     %         \Phi_k
    %     %     \matr,
    %     % \quad 
    %     % \overline R
    %     %     \isdef 
    %     %         \matl{cc}
    %     %             R_z & 0 \\
    %     %             0   & R_u
    %     %         \matr.
    % \end{align}
    % \begin{align}
    %     P_{k+1} 
    %         &=
    %             P_{k}
    %             -  \frac
    %                 { P_{k}\Phi_{k-1}^\rmT  \Phi_{k-1} P_{k} }
    %                 { 1 +   \Phi_{k-1} P_{k} \Phi_{k-1}^\rmT  }.
    %     \label{eq:P_update_noInverse}
    % \end{align}
\end{proposition}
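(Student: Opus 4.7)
The plan is to recognize $J_k$ as a standard regularized weighted least-squares cost in $\theta$ and carry out the usual RLS derivation, with the only extra ingredient being the substitution $\hat{z}_i(\theta) = z_i + \Phi_{\rmf,i}\theta - u_{\rmf,i}$. First I would substitute $G_\rmf(\shiftq)\Phi_k \mapsto \Phi_{\rmf,k}$ and $G_\rmf(\shiftq) u_k \mapsto u_{\rmf,k}$ into \eqref{eq:zhat_def} so that $\hat{z}_i(\theta)$ is affine in $\theta$, making $J_k(\theta)$ strictly convex and quadratic. The unique minimizer is then obtained from the first-order condition $\nabla_\theta J_k(\theta_{k+1})=0$, which yields the normal equation
\begin{align}
    \Bigl(P_0^{-1} + \sum_{i=0}^{k} \Phi_{\rmf,i}^\rmT R_z \Phi_{\rmf,i}\Bigr)\theta_{k+1}
    = P_0^{-1}\theta_0 - \sum_{i=0}^{k} \Phi_{\rmf,i}^\rmT R_z (z_i - u_{\rmf,i}).
\end{align}

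Next I would define $P_{k+1}^{-1} \isdef P_0^{-1} + \sum_{i=0}^{k} \Phi_{\rmf,i}^\rmT R_z \Phi_{\rmf,i}$, from which the rank-one-type recursion $P_{k+1}^{-1} = P_k^{-1} + \Phi_{\rmf,k}^\rmT R_z \Phi_{\rmf,k}$ is immediate. Applying the Sherman--Morrison--Woodbury identity to this recursion directly produces \eqref{eq:P_update_noInverse}, avoiding any explicit inversion of $P_k$.

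To derive \eqref{eq:theta_update}, I would evaluate the batch formula at step $k{-}1$ to write $P_k^{-1}\theta_k = P_0^{-1}\theta_0 - \sum_{i=0}^{k-1}\Phi_{\rmf,i}^\rmT R_z(z_i - u_{\rmf,i})$, and subtract it from the batch formula at step $k$ to obtain $P_{k+1}^{-1}\theta_{k+1} = P_k^{-1}\theta_k - \Phi_{\rmf,k}^\rmT R_z (z_k - u_{\rmf,k})$. Using $P_k^{-1} = P_{k+1}^{-1} - \Phi_{\rmf,k}^\rmT R_z \Phi_{\rmf,k}$ to eliminate $P_k^{-1}$ on the right, multiplying through by $P_{k+1}$, and collecting the $\Phi_{\rmf,k}\theta_k$ term inside the residual gives exactly \eqref{eq:theta_update}.

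The routine parts are the gradient calculation and the Woodbury step. The main obstacle is purely bookkeeping in the $\theta$-update derivation: one must keep track of the fact that the residual in \eqref{eq:theta_update} is evaluated with the \emph{current} gain $\theta_k$ rather than the filtered control $u_{\rmf,k}$ alone, and that the gain multiplying the residual uses the \emph{updated} covariance $P_{k+1}$. These choices are what turn the naive rewrite into the stated form, and they are forced by the algebraic identity $P_{k+1}\Phi_{\rmf,k}^\rmT R_z = P_k\Phi_{\rmf,k}^\rmT(R_z^{-1} + \Phi_{\rmf,k}P_k\Phi_{\rmf,k}^\rmT)^{-1}$, which I would verify as a short lemma if a cleaner presentation is desired.
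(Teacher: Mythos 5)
Your proposal is correct and follows essentially the same route as the paper, which merely sketches the argument by observing that $J_k$ is quadratic in $\theta$ and citing the standard RLS recursion; your normal-equation, information-matrix, and Woodbury steps are exactly the "specialization of the RLS algorithm" the paper invokes. The only difference is that you carry out the derivation in full rather than deferring to the reference.
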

% \textbf{Proof:} See \cite{AseemRLS}
\begin{proof}
    Due to page limits, a sketch of the proof is provided below. 
    Note that $J_k(\theta)$ is a quadratic function of $\theta,$ whose minimizer can be computed recursively using the RLS algorithm as shown in \cite{islam2019recursive}.
    $\theta_{k+1}$ and $P_{k+1}$ given by \eqref{eq:theta_update} and \eqref{eq:P_update_noInverse} are then the specialization of the RLS algorithm. 
\end{proof}

Finally, the control is given by
% \begin{align}
    $u_{k+1} = \Phi_{k+1} \theta_{k+1}.$
% \end{align}

%% -- parham, I already mentioned this in line 717
% \clearpage
\section{Simulation Results}
This section presents several numerical examples demonstrating the application of the RCAC algorithm to synthesize a controller for command following using in situ measurements. 
In all of the command following examples, we set $P_0 = 10^{-5}I_3, N_1 = 1$. 
Note that $I_3$ is the $3\time3$ identity matrix.
These RCAC hyperparameters are obtained by a trivial grid search method to obtain a satisfactory transient response in a nominal scenario. 
% 
% The RCAC's output is the capture radi
In all subsequence examples, 
The capture radius $r_0,$ modulated by RCAC, is 
\begin{align}
    r_0 = \overline{r}_0 + 0.001 u_k, 
\end{align}
where $\overline r_0 = 53.58$ $\rm mm$ is the nominal capture radius.
Note that the factor of 0.001 multiplying $u_k$ ensures that the numerical values populating the regressor matrix $\phi$ have similar magnitudes, thus ensuring that the retrospective optimization problem is numerically well-posed.

\subsection{Command Following}
\subsubsection{Step Command}
The SFRJ is commanded to generate a constant command of $100$ $\rmN.$
% The SFRJ is commanded to generate a constant thrust value of $r = 100 $ N. 
Figure \ref{fig:single_step_NNB_no_noise} shows the closed-loop response with the learning PID controller in the loop, 
a) shows the commanded and the generated thrust,
b) shows the capture radius $r_0$ modulated by RCAC,
c) shows the absolute value of the tracking error $z$ on a log scale, 
d) shows the PID controller gains $\theta$ updated by RCAC at each step, 
e) shows the regression rate of the solid fuel, and 
f) shows the total pressure at station ``4''.  The total pressure is a useful proxy for system efficiency and is integral to calculating thrust, specific impulse, and characteristic velocity.
Note that RCAC optimizes the controller coefficients using only the measured data and does not rely on the SFRJ model to update the controller gains.

\label{sec:results}
\begin{figure}[htbp]
    \centering
    \includegraphics[width=\columnwidth]{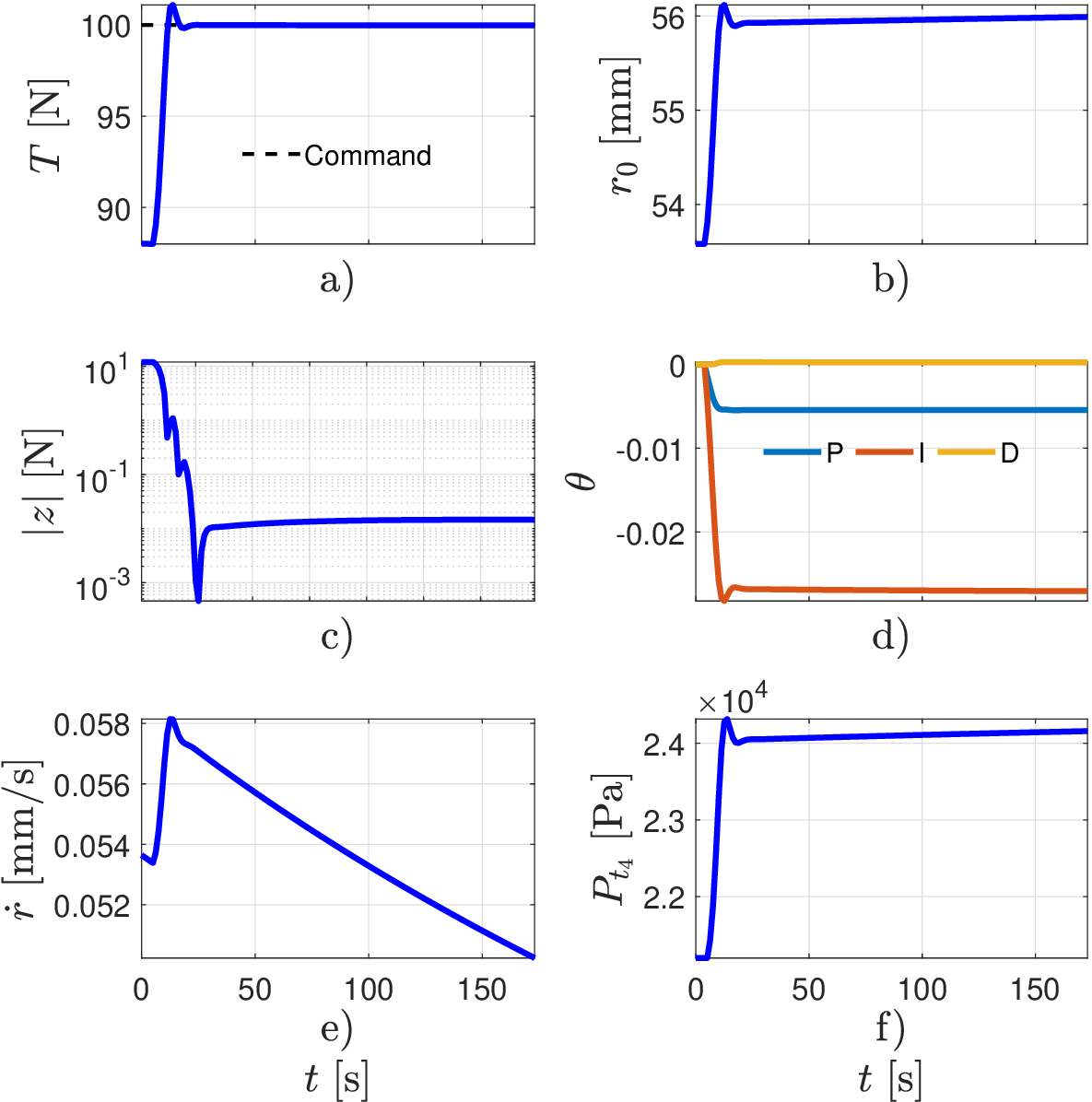}
    %\vspace{-2em}
    \caption{Closed-loop response of the SFRJ to a single step command.}
    %\vspace{-1em}
    \label{fig:single_step_NNB_no_noise}
\end{figure} 

% \subsubsection{Changing Step Command}
% Next, we investigate the performance of the adaptive controller in the case where the SFRJ is commanded to follow a sequence of step commands
% %
% Note that RCAC hyperparameters are not changed.
% % In RCAC, we set $P_0 = 10^{-5}I_3, N_1 = 1$.  
% Figure \ref{fig:double_step_NNB_no_noise} shows the closed-loop response of the SFRJ to a sequence of step commands,
% a) shows the commanded and the generated thrust,
% b) shows the capture radius $r_0$ modulated by RCAC,
% c) shows the absolute value of the tracking error $z$ on a log scale, 
% d) shows the PID controller gains $\theta$ updated by RCAC at each step, 
% e) shows the regression rate of the solid fuel, and 
% f) shows the total pressure at station ``4''.  The total pressure is a useful proxy for system efficiency and is integral to calculating thrust, specific impulse, and characteristic velocity.
% % 
% % Note that the RCAC hyperparameters are not readjusted. 

% \begin{figure}[htbp]
%     \centering
%     \includegraphics[width=\columnwidth]{Figures/double_step_NNB_no_noise.eps}
%     \caption{Closed-loop response of the SFRJ to a sequence of step commands.}
%     \label{fig:double_step_NNB_no_noise}
% \end{figure}

\subsubsection{Doublet Command}
Next, we investigate the performance of the adaptive controller in the case where the SFRJ is commanded to follow a doublet command.
Note that RCAC hyperparameters are not changed.
% In RCAC, we set $P_0 = 10^{-5}I_3, N_1 = 1$.  
Figure \ref{fig:square_NNB_no_noise} shows the closed-loop response of the SFRJ to a doublet command.
% a) shows the commanded and the generated thrust,
% b) shows the capture radius $r_0$ modulated by RCAC,
% c) shows the absolute value of the tracking error $z$ on a log scale, 
% d) shows the PID controller gains $\theta$ updated by RCAC at each step, 
% e) shows the regression rate of the solid fuel, and 
% f) shows the total pressure at station ``4''.  The total pressure is a useful proxy for system efficiency and is integral to calculating thrust, specific impulse, and characteristic velocity.

\begin{figure}[htbp]
    \centering
    \includegraphics[width=\columnwidth]{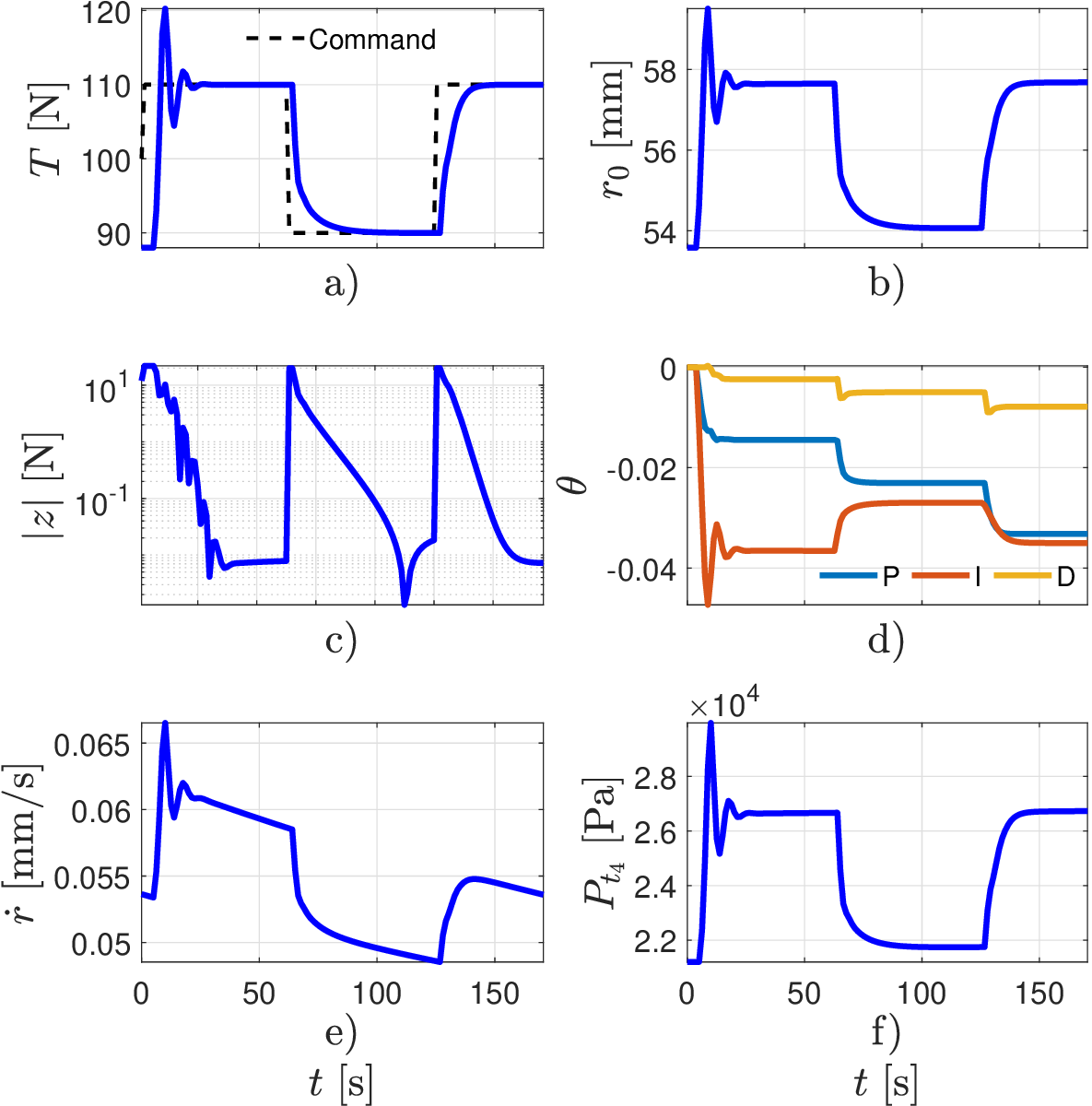}
    %\vspace{-2em}
    \caption{Closed-loop response of the SFRJ to a doublet command.}
    %\vspace{-1em}
    \label{fig:square_NNB_no_noise}
\end{figure}

\subsubsection{Ramp Command}
Next, we investigate the performance of the adaptive controller in the case where the SFRJ is commanded to follow a ramp command.
Note that RCAC hyperparameters are not changed.
% In RCAC, we set $P_0 = 10^{-5}I_3, N_1 = 1$.  
Figure \ref{fig:ramp_up_down_NNB_no_noise} shows the closed-loop response of the SFRJ to a sequence of ramp commands.
\begin{figure}[htbp]
    \centering
    \includegraphics[width=\columnwidth]{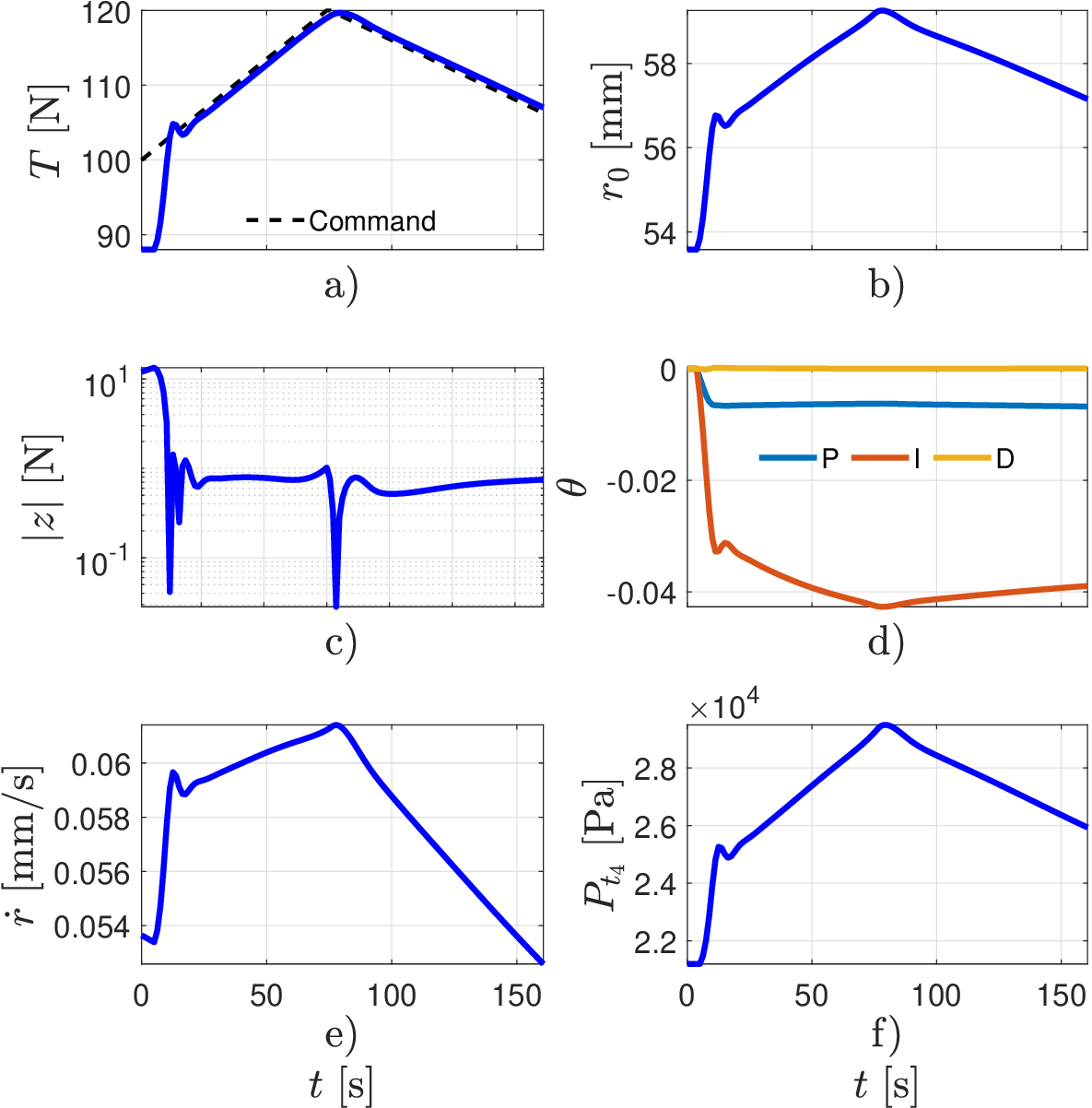}
    %\vspace{-2em}
    \caption{Closed-loop response of the SFRJ to a sequence of ramp commands.}
    %\vspace{-1em}
    \label{fig:ramp_up_down_NNB_no_noise}
\end{figure}
% a) shows the commanded and the generated thrust,
% b) shows the capture radius $r_0$ modulated by RCAC,
% c) shows the absolute value of the tracking error $z$ on a log scale, 
% d) shows the PID controller gains $\theta$ updated by RCAC at each step, 
% e) shows the regression rate of the solid fuel, and 
% f) shows the total pressure at station ``4''.  The total pressure is a useful proxy for system efficiency and is integral to calculating thrust, specific impulse, and characteristic velocity.

\subsubsection{Harmonic Command}
Next, we investigate the performance of the adaptive controller in the case where the SFRJ is commanded to follow a harmonic command.
Note that RCAC hyperparameters are not changed.
% In RCAC, we set $P_0 = 10^{-5}I_3, N_1 = 1$.  
Figure \ref{fig:Sine_NNB_no_noise} shows the closed-loop response of the SFRJ to a harmonic command.
% a) shows the commanded and the generated thrust,
% b) shows the capture radius $r_0$ modulated by RCAC,
% c) shows the absolute value of the tracking error $z$ on a log scale, 
% d) shows the PID controller gains $\theta$ updated by RCAC at each step, 
% e) shows the regression rate of the solid fuel, and 
% f) shows the total pressure at station ``4''.  The total pressure is a useful proxy for system efficiency and is integral to calculating thrust, specific impulse, and characteristic velocity.

\begin{figure}[htbp]
    \centering
    \includegraphics[width=\columnwidth]{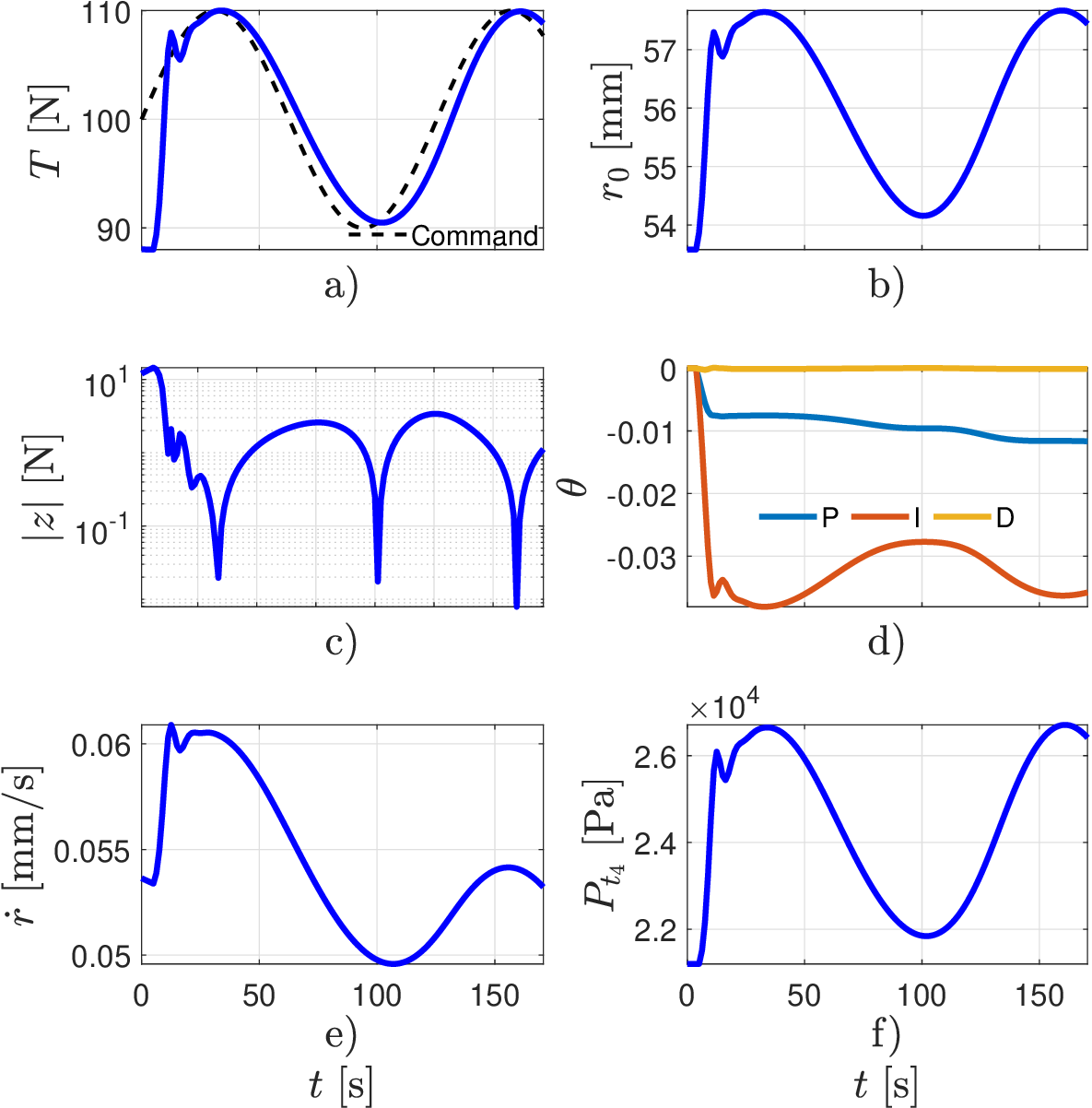}
    %\vspace{-2em}
    \caption{Closed-loop response of the SFRJ to a harmonic command.}
    %\vspace{-1em}
    \label{fig:Sine_NNB_no_noise}
\end{figure}

\subsection{Effect of Hyperparameters}
Next, we investigate the effect of RCAC hyperparameters on the closed-loop performance. 
We reconsider the step command-following problem.
In RCAC, we set $N_1 = n$ and $P_0 = p I_3,$ where $n = 0.1, 1, 10$ and $p = 10^{-4}, 10^{-5}, 10^{-6}, 10^{-7}.$
The closed-loop simulation is thus run twelve times with all combinations of $n$ and $p.$
Figure \ref{fig:RCAC_SFRJ_NNB_no_noise_Sensitivity} shows the effect of RCAC hyperparameters on the closed-loop response of the SFRJ. 
Note that a larger value of $P_0$ yields faster convergence but results in a larger overshoot. 
Similarly, a larger value of $N_1$ yields a faster response.

\begin{figure}[htbp]
    \centering
    \includegraphics[width=0.9\columnwidth]{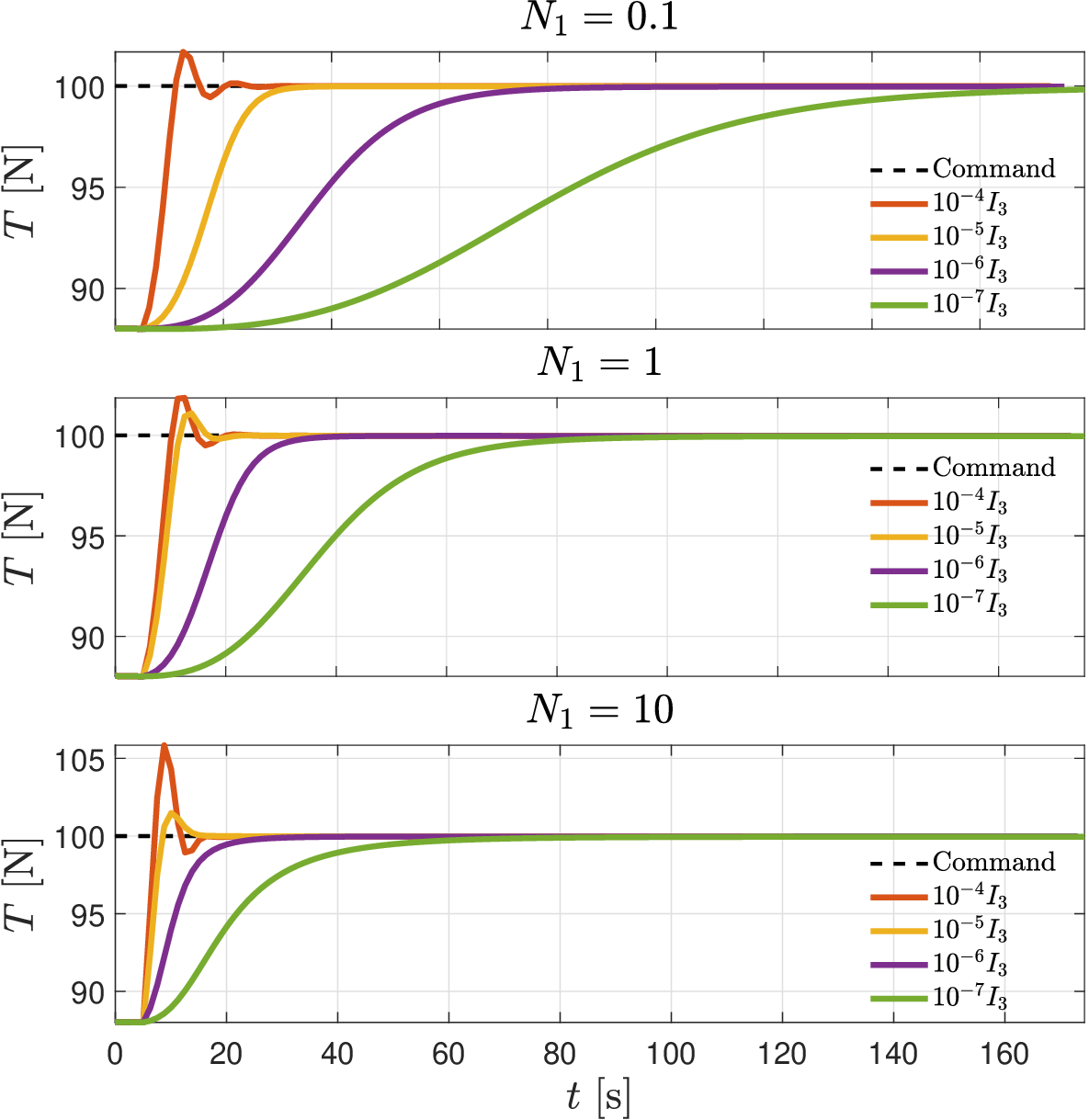}
    %\vspace{-1em}
    \caption{Effect of RCAC hyperparameters $P_0$ and $N_1$ on the closed-loop response of the SFRJ.}
    %\vspace{-1em}
    \label{fig:RCAC_SFRJ_NNB_no_noise_Sensitivity}
\end{figure}

% \clearpage
\section{Conclusions}
\label{sec:conclusion}

%Advancements in robust control systems for SFRJ systems that work with limited in-situ measurement are required to realize the potential performance benefits inherent to the SFRJ system.

The deployment of SFRJ systems is hindered by the difficulty of understanding and controlling the complex multi-physics combustion process and directly measuring the onboard-generated thrust.
This work described a framework for in-situ thrust monitoring and regulation using a neural network coupled with a learning-based adaptive controller.
A quasi-one-dimensional thermodynamic model with variable inlet geometry was used to model the SFRJ dynamics. 
A neural network was trained to predict thrust based on in-situ measurements of inlet geometry variable, altitude, combustor pressure, and exhaust $CO$ composition.
A sensitivity analysis was performed to determine the optimal neural network structure for minimizing the mean-squared error loss. 
With the neural network predictions as the proxy for thrust generated by the SFRJ, numerical simulations demonstrated successful command following response for several thrust commands without the need to retune the adaptive controller.  
Furthermore, the closed-loop response was found to be robust to variations in the adaptive controller's hyperparameters.

% Numerical simulation are performed to assess the combined neural network and learning control framework thrust regulation.  The coupled neural network and adaptive controller is commanded to follow a step, doublet, ramp, and harmonic command.  Results show ... 

% An additional sensitivity study is performed on the coupled adaptive controller to determine optimal hyperparameter tuning for fast response.

\section{Acknowledgment}
This research was supported by the Office of Naval Research grant N00014-23-1-2468.  RD was supported by the DoD National Defense Science and Engineering Graduate (NDSEG) Fellowship

% \bibliography{bib/bib_paper}
% \thebibliography{}

\printbibliography

\end{document}